\newtheorem{theorem}{Theorem}[section]
\newtheorem{proposition}[theorem]{Proposition}
\newtheorem{lemma}[theorem]{Lemma}
\newtheorem{corollary}[theorem]{Corollary}
\newtheorem{remark}[theorem]{Remark}
\newcommand{\A}{\mathbb A}
\newcommand{\F}{\mathbb F}
\newcommand{\K}{\mathbb K}
\newcommand{\Pp}{\mathbb P}
\newcommand{\Lt}{\operatorname{Lt}}
\newcommand{\fp}{\F_{\hskip-0.7mm p}}
\newcommand{\fq}{\F_{\hskip-0.7mm q}}
\newcommand{\cfq}{\overline{\F}_{\hskip-0.7mm q}}
\begin{document}
\title[Markoff-Hurwitz equations]{Estimates on the number of rational solutions of Markoff-Hurwitz equations over finite fields}

\author[M. Abdon]{
Miriam Abdon${}^{1}$}
\author[D. Oliveira]{
Daniela Alves de Oliveira${}^{2}$}
\author[J. Capaverde]{
Juliane Capaverde${}^{3}$}
\author[M. P\'erez]{
Mariana P\'erez${}^{4,5}$}
\author[M. Privitelli]{
Melina Privitelli${}^{4,5}$}

\address{${}^{1}$ Instituto de Matemática e Estatística, Universidade Federal Fluminense, Niterói, RJ 24210-201, Brazil}
\email{miriam\textunderscore abdon@id.uff.br}

\address{${}^{2}$ Instituto de Ciências Matemáticas e de Computação, Universidade de São Paulo (USP), São Carlos, Brazil.}
\email{danielaalvesoliveira@gmail.com}

\address{${}^{3}$ Departamento de Matemática Pura e Aplicada, Universidade Federal do Rio Grande do Sul, Avenida Bento Gonçalves, 9500, Porto Alegre, RS 91509-900, Brazil.}
\email{juliane.capaverde@ufrgs.br}

\address{${}^{4}$ Consejo Nacional de Investigaciones Científicas y Técnicas (CONICET),
Ar\-gentina}

%The author was supported by FAPESP 2022/14004-7
%

\address{${}^{5}$Universidad Nacional de Hurlingham, Laboratorio de Investigación en Geometría y Álgebra (LIGA)\\ Av. Gdor. Vergara 2222 (B1688GEZ), Villa Tesei,
Buenos Aires, Argentina}
\email{\{mariana.perez,melina.privitelli\}@unahur.edu.ar}

\thanks{The authors were partially supported by the grants
PIUNAHUR 2023 80020230100
003HU, PIBAA 2022-2023-28720210100460CO, PIBAA 2022-2023-28720210100422CO, and FAPESP 2022/14004-7.}

\keywords{Finite fields, Markoff Hurwitz equations, $\fq$--rational solutions, estimates}%

\begin{abstract}
    Let $N$ denote the number of solutions to the generalized Markoff-Hurwitz-type equation
    \[(a_1X_1^m+\cdots + a_nX_n^m+a)^k=bX_1\cdots X_n \]
    over the finite field $\mathbb{F}_q$, where $m,k$ are positive integers, and $a,b,a_i\in \mathbb{F}_q^*$ for $i=1,\dots, n$, with $k,m\ge 2$ and $n\ge 3$. Using techniques from algebraic geometry, we provide an estimate for $N$ and establish conditions under which the equation admits solutions where all $X_i$ are nonzero.
\end{abstract}

\maketitle

\section{Introduction}
Let $\fq$ be the finite field of $q=p^s$ elements, and let \(\mathbb{F}_q^* = \mathbb{F}_q \setminus \{0\}\).  Let $N$ denote the number of $\fq$-rational solutions (i.e., solutions with coordinates in $\fq$) of a polynomial equation $f=0$, where $f\in \fq[X_1,\ldots,X_n]$. The problem of determining the value of $N$ is a fundamental question in the study of finite fields.  In general, deriving an explicit formula for $N$ is highly nontrivial. There is no known explicit formula for $N$ in general. In view of this, it would be interesting to establish estimates for $N$ and give conditions under which the equation has a nontrivial solution (i.e., a solution with $(x_1, \ldots, x_n) \neq (0, \ldots, 0)$).

Markoff-Hurwitz equations are a class of Diophantine equations given by  
\begin{equation} \label{exp2}
X_1^2 + X_2^2 + \cdots + X_n^2 = bX_1X_2\cdots X_n,  
\end{equation}  
where \( n \) and \( b \) are positive integers satisfying \( n \geq 3 \).   These equations were initially studied by A. Markoff \cite{Markoff} in the specific case \( n = b = 3 \), motivated by their connection to Diophantine approximation. Subsequently, A. Hurwitz \cite{Hurwitz} extended the study to more general forms of these equations.

In \cite{Carlitz}, L. Carlitz initiated the study of Markoff-Hurwitz-type equations over finite fields. He expressed the number $N$ in terms of Jacobsthal sums for $n=3,4$ and $p>2$. 

The generalized Markoff-Hurwitz-type equations are of the form
\begin{equation*}\label{Baoulina}
(a_1X_1^{m_1}+\cdots +a_nX_n^{m_n}+a)^k=bX_1^{k_1}\ldots X_n^{k_n},
\end{equation*}
 where $n,m_1,\ldots,m_n, k_1, \ldots, k_n$, $k$ are positive integers, $a, b \in \fq$ and $a_i \in \fq^*$, for $1\leq i \leq n$. These equations have been extensively studied in the special case $a=0$ and $k=1$. Moreover, several papers provide explicit formulas of $N$ for this type of equations in very particular cases. In \cite{Carlitz} explicit formulas of $N$ were provided for the case $k=2$, $k_i=m_i=a_i=1$, $1\leq i\leq n $, $n=3,4$  and $p>2$. 
 %, which hold under certain conditions.
 %(see, e.g., \cite{Ba15}). In this article
In a series of papers \cite{Baoulina1, Baoulina2, Baoulina3}, I. Baoulina studied  equations of the form
\begin{equation}\label{eqgeneralized}
    a_1X_1^{m_1} + a_2X_2^{m_2} + \cdots + a_nX_n^{m_n} = bX_1\cdots X_n,
\end{equation}
where \(a_i, b \in \mathbb{F}_q^*\), \(m_i\) are positive integers such that \(m_i\) divides \(q-1\) for \(i = 1, \dots, n\), and \(n \geq 2\). Her work employed techniques from character sums and combinatorics.

In \cite{Cao}, W. Cao considered the generalized equation
\begin{equation}\label{cao1}
    a_1X_1^{m_1} + a_2X_2^{m_2} + \cdots + a_nX_n^{m_n} = bX_1^{k_1} \cdots X_n^{k_n},
\end{equation}  
where \(a_i, b \in \mathbb{F}_q^*\) and \(m_i, k_i\) are positive integers for \(i = 1, \dots, n\), with \(n \geq 2\). By employing a tool augmented with degree matrices, it was shown that all generalized Markoff-Hurwitz-type equations in \(n\) variables, with associated invertible augmented degree matrices satisfying certain conditions, have the same number of solutions. Moreover, the expression for the number of solutions of \eqref{cao1} becomes significantly simplified under these conditions.
In \cite{Pan2014} the authors considered the equation \eqref{Baoulina} with $k\geq 2$ and they obtained an explicit estimate of $N$   which holds under certain relation between the exponents and $q$.
% we determine an extra term , in the sense that we prove that  while Mordell proves that $N_q^*= \frac{(q-1)^n}{q}+ \mathcal{O}(q^{n/2})$.
In \cite{Melina&Mariana2020}, the authors concentrated in the case $a\neq 0$ and $k=1$. More precisely, they obtain an estimate on $N^*$, the number of $\fq$--rational solutions with the condition that $x_1\cdots x_n \neq 0$. This estimate improves Mordell's  \cite{Mo63} for the case $m:=m_1=\cdots = m_n$ since the result holds without conditions on the characteristic of the field and it requires that $m>k_1+\cdots+k_n$ instead of $k_1= \cdots= k_n=1$. 
Also, the estimate improves Mordell's by determining an extra term in the asymptotic development of $N^*$ in terms of $q$.

In this paper, we study a further generalization of Markoff-Hurwitz-type equations:  
\begin{equation}\label{MH}
      (a_1X_1^{m} + \dots + a_nX_n^m + a)^k = bX_1 \cdots X_n,
\end{equation}  
where \(a, b, a_i \in \mathbb{F}_q^*\), for \(i = 1, \dots, n\),  and \(m, n, k\) are positive integers with \(m, k \geq 2\) and $n\geq 3$, such that \(p\) does not divide \(mk\) and \(mk > n\).  By employing techniques from algebraic geometry, we provide an estimate for \(N\) and determine whether Eq.~\eqref{MH} admits solutions where all coordinates are nonzero, assuming that $m$ and $q-1$ are relatively prime.

  The paper is organized as follows. In Section $2$, we review the algebraic geometry notions used throughout the paper. In Section $3$, we study the geometric properties of the hypersurface associated with Eq. \eqref{MH}. In Section $4$, we obtain estimates for the number of $\fq$--rational solutions and establish existence results for this equation.

\section{Basic notions of algebraic geometry}
\label{sec: notions of algebraic geometry}
In this section we collect the basic definitions and facts of
algebraic geometry that we need in the sequel. We use standard
notions and notations which can be found in, e.g., \cite{Kunz85},
\cite{Shafarevich94}.

Let $\K$ be any of the fields $\fq$ or $\cfq$. We denote by $\A^r$
the affine $r$--dimensional space $\cfq{\!}^{r}$ and by $\Pp^r$ the
projective $r$--dimensional space over $\cfq{\!}^{r+1}$. Both spaces
are endowed with their respective Zariski topologies over $\K$, for
which a closed set is the zero locus of a set of polynomials in
$\K[X_1,\ldots, X_r]$, or of a set of homogeneous polynomials in
$\K[X_0,\ldots, X_r]$.

A subset $V\subset \Pp^r$ is a {\em projective variety defined over}
$\K$ (or a projective $\K$--variety for short) if it is the set of
common zeros in $\Pp^r$ of homogeneous polynomials $F_1,\ldots, F_m
\in\K[X_0 ,\ldots, X_r]$. Correspondingly, an {\em affine variety of
	$\A^r$ defined over} $\K$ (or an affine $\K$--variety) is the set of
common zeros in $\A^r$ of polynomials $F_1,\ldots, F_{m} \in
\K[X_1,\ldots, X_r]$. We think a projective or affine $\K$--variety
to be equipped with the induced Zariski topology. We shall denote by
$\{F_1=0,\ldots, F_m=0\}$ or $V(F_1,\ldots,F_m)$ the affine or
projective $\K$--variety consisting of the common zeros of
$F_1,\ldots, F_m$. 

In the remaining part of this section, unless otherwise stated, all
results referring to varieties in general should be understood as
valid for both projective and affine varieties.

A $\K$--variety $V$ is {\em irreducible} if it cannot be expressed
as a finite union of proper $\K$--subvarieties of $V$. Further, $V$
is {\em absolutely irreducible} if it is $\cfq$--irreducible as an
$\cfq$--variety. Any $\K$--variety $V$ can be expressed as an
irredundant union $V=\mathcal{C}_1\cup \cdots\cup\mathcal{C}_s$ of
irreducible (absolutely irreducible) $\K$--varieties, unique up to
reordering, called the {\em irreducible} ({\em absolutely
	irreducible}) $\K$--{\em components} of $V$.

For a $\K$--variety $V$ contained in $\Pp^r$ or $\A^r$, its {\em
	defining ideal} $I(V)$ is the set of polynomials of $\K[X_0,\ldots,
X_r]$, or of $\K[X_1,\ldots, X_r]$, vanishing on $V$. The {\em
	coordinate ring} $\K[V]$ of $V$ is the quotient ring
$\K[X_0,\ldots,X_r]/I(V)$ or $\K[X_1,\ldots,X_r]/I(V)$. The {\em
	dimension} $\dim V$ of $V$ is the length $n$ of a longest chain
$V_0\varsubsetneq V_1 \varsubsetneq\cdots \varsubsetneq V_n$ of
nonempty irreducible $\K$--varieties contained in $V$.
We say that $V$ has {\em pure dimension} $n$ if every irreducible
$\K$--component of $V$ has dimension $n$. A $\K$--variety of $\Pp^r$
or $\A^r$ of pure dimension $r-1$ is called a $\K$--{\em	hypersurface}. A $\K$--hypersurface of $\Pp^r$ (or $\A^r$) can also
be described as the set of zeros of a single nonzero polynomial of
$\K[X_0,\ldots, X_r]$ (or of $\K[X_1,\ldots, X_r]$).

The {\em degree} $\deg V$ of an irreducible $\K$--variety $V$ is the
maximum of $|V\cap L|$, considering all the linear spaces $L$ of
codimension $\dim V$ such that $|V\cap L|<\infty$. More generally,
following \cite{Heintz83} (see also \cite{Fulton84}), if
$V=\mathcal{C}_1\cup\cdots\cup \mathcal{C}_s$ is the decomposition
of $V$ into irreducible $\K$--components, we define the degree of
$V$ as
$$\deg V:=\sum_{i=1}^s\deg \mathcal{C}_i.$$
The degree of a $\K$--hypersurface $V$ is the degree of a polynomial
of minimal degree defining $V$. %Another property is that the degree
%of a dense open subset of a $\K$--variety $V$ is equal to the degree
%of $V$.
%
%An important tool
%We shall use the following {\em B\'ezout inequality} (see
%\cite{Heintz83}, \cite{Fulton84}, \cite{Vogel84}): if $V$ and $W$
%are $\K$--varieties of the same ambient space, then
%%
%\begin{equation}\label{eq: Bezout}
%\deg (V\cap W)\le \deg V \cdot \deg W.
%\end{equation}

We shall use the following {\em B\'ezout inequality} (see
\cite{Fulton84, Heintz83, Vogel84}): if $V$ and $W$
are $\K$--varieties of the same ambient space, then
\begin{equation}\label{eq: Bezout}
\deg (V\cap W)\le \deg V \cdot \deg W.
\end{equation}

Let $V\subset\A^r$ be a $\K$--variety, $I(V)\subset \K[X_1,\ldots,
X_r]$ its defining ideal and $x$ a point of $V$. The {\em dimension}
$\dim_xV$ {\em of} $V$ {\em at} $x$ is the maximum of the dimensions
of the irreducible $\K$--components of $V$ containing $x$. If
$I(V)=(F_1,\ldots, F_m)$, the {\em tangent space} $\mathcal{T}_xV$
to $V$ at $x$ is the kernel of the Jacobian matrix $(\partial
F_i/\partial X_j)_{1\le i\le m,1\le j\le r}(x)$ of $F_1,\ldots, F_m$
with respect to $X_1,\ldots, X_r$ at $x$. We have
$\dim\mathcal{T}_xV\ge \dim_xV$ (see, e.g., \cite[page
94]{Shafarevich94}). The point $x$ is {\em regular} if
$\dim\mathcal{T}_xV=\dim_xV$; otherwise, $x$ is called {\em
	singular}. The set of singular points of $V$ is the {\em singular
	locus} of $V$; it is a closed $\K$--subvariety of
$V$. A variety is called {\em nonsingular} if its singular locus is
empty. For projective varieties, the concepts of tangent space,
regular and singular point can be defined by considering an affine
neighborhood of the point under consideration.

%Let $V$ and $W$ be irreducible affine $\K$--varieties of the same
%dimension and $f:V\to W$ a regular map with $\overline{f(V)}=W$,
%where $\overline{f(V)}$ denotes the closure of $f(V)$ with respect
%to the Zariski topology of $W$. Such a map is called {\em dominant}.
%Then $f$ induces a ring extension $\K[W]\hookrightarrow \K[V]$ by
%composition with $f$. We say that the dominant map $f$ is {\em
%	finite} if this extension is integral, namely each element
%$\eta\in\K[V]$ satisfies a monic equation with coefficients in
%$\K[W]$. A dominant finite
%morphism is necessarily closed. Another fact %concerning
%we shall use is that the preimage $f^{-1}(S)$ of an irreducible
%closed subset $S\subset W$ under a dominant finite morphism $f$ is
%of pure dimension $\dim S$ (see, e.g., \cite[\S 4.2,
%Proposition]{Danilov94}).
%
%----------------------------------------------------------------
%----------------------------------------------------------------
%----------------------------------------------------------------
%----------------------------------------------------------------
%
\subsection{Rational points}
Let $\Pp^r(\fq)$ be the $r$--dimensional projective space over $\fq$
and $\A^r(\fq)$ the $r$--dimensional $\fq$--vector space $\fq^n$.
For a projective variety $V\subset\Pp^r$ or an affine variety
$V\subset\A^r$, we denote by $V(\fq)$ the set of $\fq$--rational
points of $V$, namely $V(\fq):=V\cap \Pp^r(\fq)$ in the projective
case and $V(\fq):=V\cap \A^r(\fq)$ in the affine case. For an affine
variety $V$ of dimension $n$ and degree $\delta$, we have the
following bound (see, e.g., \cite[Lemma 2.1]{CaMa06}):
\begin{equation*}\label{eq: upper bound -- affine gral}
|V(\fq)|\leq \delta\, q^n.
\end{equation*}
On the other hand, if $V$ is a projective variety of dimension $n$
and degree $\delta$, then we have the following bound (see
\cite[Proposition 12.1]{GhLa02a} or \cite[Proposition 3.1]{CaMa07};
see \cite{LaRo15} for more precise upper bounds):
\begin{equation*}\label{eq: upper bound -- projective gral}
|V(\fq)|\leq \delta\, p_n,
\end{equation*}
where $p_n:=q^n+q^{n-1}+\cdots+q+1=|\Pp^n(\fq)|$.
%
%----------------------------------------------------------------
%----------------------------------------------------------------
%----------------------------------------------------------------
%----------------------------------------------------------------
%
\subsection{Complete intersections}\label{subsec: complete intersections}
A \emph{set theoretic complete intersection} is an affine $\K$-variety $ V (F_1,\ldots,F_m)\subset\A^r$ or a projective $\K$-variety $V (F_1,\ldots,F_m)\subset\Pp^r$ defined by $m \leq r$ polynomials $F_1, \ldots,F_m \in \K[X_1, \ldots, X_r]$ or homogeneous polynomials $F_1, \ldots,F_m$ in $\mathbb{K}[X_0,\ldots,X_r]$, which is of pure dimension $r-m$. If in addition $(F_1, \ldots,F_m)$ is a radical ideal of $\K[X_1, \ldots, X_r]$ or $\K[X_0, \ldots,X_r]$, then we say that $V (F_1,\ldots,F_m)$ is an \emph{ideal–theoretic complete intersection} (complete intersection for short). 

Elements $F_1,\ldots, F_m$ in $\mathbb{K}[X_1,\ldots,X_r]$ or
$\mathbb{K}[X_0,\ldots,X_r]$ form a \emph{regular sequence} if the ideal $(F_1, \ldots, F_m)$  they define in $\K[X_1, \ldots,X_r]$ or $\mathbb{K}[X_0, \ldots, X_r]$ is a proper ideal, $F_1$
is nonzero and, for $2 \leq i \leq m$,  $F_i$ is  neither zero nor a zero divisor in  $\mathbb{K}[X_1,\ldots,X_r]/ (F_1,\ldots,F_{i-1})$ or
$\mathbb{K}[X_0,\ldots,X_r]/ (F_1,\ldots,F_{i-1})$.  In such a case, the (affine or projective) variety $ V (F_1,\ldots,F_m)$ they define is a set theoretic complete intersection.

In what follows we
shall use the following result (see, e.g., \cite[Chapitre 3,
Remarque 2.2]{Lejeune84}).
\begin{lemma}\label{lemma: reg seq and complete int}
If $F_1,\ldots, F_m$ are  $m\le r+1$ homogeneous polynomials in $\K[X_0
,\ldots, X_r]\setminus \K$, the following conditions are equivalent:
   \begin{enumerate}[{\normalfont (a)}]
     \item $F_1 ,\ldots, F_m$ define a set theoretic complete intersection of $\mathbb{P}^r$.
     \item $F_1 ,\ldots, F_m$  is a regular sequence in $\K[X_0 ,\ldots, X_r]$.
   \end{enumerate}
\end{lemma}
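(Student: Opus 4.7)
The plan is to exploit the fact that the polynomial ring $R=\K[X_0,\ldots,X_r]$ is a Cohen--Macaulay ring, so that for any proper homogeneous ideal $I\subset R$ the height of $I$, the grade of $I$, and the codimension of the projective zero locus $V(I)\subset\Pp^r$ all coincide. With this dictionary in hand, the lemma reduces to comparing two algebraic characterizations of ideals of height $m$ generated by $m$ elements.

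For the direction (b) $\Rightarrow$ (a), I would proceed by induction on $i$. Suppose $F_1,\ldots,F_i$ is a regular sequence. Since $F_{i+1}$ is a non-zero-divisor modulo $(F_1,\ldots,F_i)$, it lies outside every minimal prime of that ideal. Combined with Krull's Hauptidealsatz, this forces each minimal prime of $(F_1,\ldots,F_{i+1})$ to strictly contain a minimal prime of $(F_1,\ldots,F_i)$, hence to have height exactly $i+1$. Iterating up to $i=m$, the ideal $(F_1,\ldots,F_m)$ has all minimal primes of height $m$, so every irreducible component of $V(F_1,\ldots,F_m)\subset\Pp^r$ has dimension $r-m$, which is precisely the definition of a set theoretic complete intersection.

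For the direction (a) $\Rightarrow$ (b), assume $V(F_1,\ldots,F_m)$ has pure dimension $r-m$. Then each minimal prime of $I=(F_1,\ldots,F_m)$ has height $m$, so $\mathrm{height}(I)=m$. Since $R$ is Cohen--Macaulay, $\mathrm{grade}(I)=\mathrm{height}(I)=m$. The crucial step is to upgrade this to the assertion that the chosen generators $F_1,\ldots,F_m$ themselves form a regular sequence. Because each $F_i$ is homogeneous of positive degree, one passes to the localization at the irrelevant maximal ideal $\mathfrak{m}=(X_0,\ldots,X_r)$, where the standard theorem for local Cohen--Macaulay rings applies: any $m$ elements generating an ideal of height $m$ form a regular sequence. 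Because regular-sequence-ness for homogeneous elements is detected at $\mathfrak{m}$, this property lifts back to $R$.

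The main obstacle is precisely the last step in the direction (a) $\Rightarrow$ (b): passing from ``$I$ has height $m$ and is generated by $m$ elements'' to ``the given generators form a regular sequence'' is not a routine manipulation of the polynomials $F_i$ but genuinely requires the Cohen--Macaulay machinery together with the graded-to-local reduction. The forward implication, by contrast, is a straightforward dimension-counting induction that uses only Krull's theorem.
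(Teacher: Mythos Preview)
Your argument is correct and follows the standard commutative-algebra route via the Cohen--Macaulay property of the polynomial ring. The induction for (b) $\Rightarrow$ (a) using Krull's Hauptidealsatz is clean, and the key observation for (a) $\Rightarrow$ (b)---that in the Cohen--Macaulay local ring $R_{\mathfrak{m}}$ any $m$ elements generating an ideal of height $m$ automatically form a regular sequence, and that for homogeneous elements this transfers back to $R$---is exactly the right mechanism.

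There is nothing to compare against, however: the paper does not prove this lemma at all. It is stated with a reference to \cite[Chapitre 3, Remarque 2.2]{Lejeune84} and used as a black box. Your write-up therefore supplies what the paper omits; the approach you give is the one that reference (and most standard treatments, e.g., Matsumura's Theorem~17.4 for the local Cohen--Macaulay characterization) would unfold to.

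One small caveat worth tightening: when you pass from ``$V(F_1,\ldots,F_m)$ has pure projective dimension $r-m$'' to ``$\mathrm{height}(I)=m$'', you should note explicitly that the irrelevant ideal $\mathfrak{m}$ cannot occur as a minimal prime of $I$ once the projective zero locus is nonempty (which is guaranteed for $m\le r$), so that the minimal primes of $I$ are exactly the homogeneous primes of height $m$ corresponding to the irreducible components. This closes the only place where the projective/graded bookkeeping could slip.
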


\begin{remark}
If $F_1,\ldots, F_m \in \K[X_1
,\ldots, X_r]$, $m\leq r$, only {\normalfont (b)} implies {\normalfont (a)} holds.
\end{remark}
 
%
% We shall frequently use the following criterion to
%prove that a variety is a complete intersection (see, e.g.,
%\cite[Theorem 18.15]{Eisenbud95}).
%
%\begin{theorem}\label{theorem: eisenbud 18.15}
%	Let $F_1,\ldots,F_m\in\mathbb{K}[X_1,\ldots,X_r]$ be polynomials
%	which form a regular sequence and let
%	$V:=V(F_1,\ldots,F_m)\subset\A^r$. Denote by
%	$(\partial\bfs{F}/\partial \bfs{X})$ the Jacobian matrix of
%	$F_1,\ldots,F_m$ with respect to $\bfs{X}:=(X_1,\ldots,X_r)$. If the subvariety
%	of $V$ defined by the set of common zeros of the maximal minors of
%	$(\partial\bfs{F}/\partial \bfs{X})$ has codimension at least one in
%	$V$, then $F_1,\ldots,F_m$ define a radical ideal. In particular,
%	$V$ is a complete intersection.
%\end{theorem}

If $V\subset\Pp^r$ is a complete intersection defined over $\K$ of
dimension $r-m$, and $F_1 ,\ldots, F_m$ is a system of homogeneous
generators of $I(V)$, the degrees $d_1,\ldots, d_m$ depend only on
$V$ and not on the system of generators. Arranging the $d_i$ in such
a way that $d_1\geq d_2 \geq \cdots \geq d_m$, we call $(d_1,\ldots,
d_m)$ the {\em multidegree} of
$V$. In this case, a stronger version of 
\eqref{eq: Bezout} holds, called the {\em B\'ezout theorem} (see,
e.g., \cite[Theorem 18.3]{Harris92})
\begin{equation*}\label{eq: Bezout theorem}
\deg V=d_1\cdots d_m.
\end{equation*}
A complete intersection $V$ is called {\em normal} if it is {\em
	regular in codimension 1}, that is, the singular locus
$\mathrm{Sing}(V)$ of $V$ has codimension at least $2$ in $V$,
namely $\dim V-\dim \mathrm{Sing}(V)\ge 2$ (actually, normality is a
general notion that agrees on complete intersections with the one we
define here).

%A fundamental result for projective complete
%intersections is the Hartshorne connectedness theorem (see, e.g.,
%\cite[Theorem VI.4.2]{Kunz85}): if $V\subset\Pp^r$ is a complete
%intersection defined over $\K$ and $W\subset V$ is any
%$\K$--subvariety of codimension at least 2, then $V\setminus W$ is
%connected in the Zariski topology of $\Pp^r$ over $\K$. Applying the
%Hartshorne connectedness theorem with $W:=\mathrm{Sing}(V)$, one
%deduces the following result.
%
%\begin{theorem}\label{theorem: normal complete int implies irred}
%	If $V\subset\Pp^r$ is a normal complete intersection, then $V$ is
%	absolutely irreducible.
%\end{theorem}

%We shall frequently use the following
%criterion to prove that a variety is a complete intersection.

In what follows, we present Theorem 15.15 from the book \cite{Eisenbud95} , which is central to proving that the varieties we shall consider are complete intersections. Fixed a monomial order on \( K[X_1, \dots, X_n] \), we denote by \( \Lt(g) \) the leading term of a polynomial \( g \in K[X_1, \dots, X_n] \) with respect to the fixed monomial order.

%For the sake of a self-contained and fluent exposition, we include the definition of {\it{initial element}} of a module. If $>$ is a monomial order, then for any $f \in F$ we define the 
%\emph{initial term of $f$}, written $\mathrm{in}_{>}(f)$ to be the 
%greatest term of $f$ with respect to the order $>$, and if $M$ is a 
%submodule of $F$ we define $\mathrm{in}_{>}(M)$ to be the monomial 
%submodule generated by the elements $\mathrm{in}_{>}(f)$ for all 
%$f \in M$.

\begin{proposition} \cite[Proposition 15.15]{Eisenbud95}\label{Prop1}
Let $S=K[X_1,\ldots,X_n]$ the polynomial ring with coefficients in a field $k$. We fix a monomial order in $S$ and We consider $F$, a free $S$-module with basis and monomial order compatible 
with a given monomial order on $S$. If $M \subset F$ is any submodule 
and $h_1, \ldots, h_u \in S$ are such that 
$\Lt(h_1), \ldots, \Lt(h_u)$ is a regular sequence 
on $F / \Lt(M)$, then $h_1, \ldots, h_u$ is a regular sequence 
on $F/M$.\end{proposition}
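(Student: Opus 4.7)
The plan is to proceed by induction on $u$, with the heart of the argument being the leading-term identity
\[
\Lt\!\bigl(M + (h_1, \ldots, h_i)F\bigr) \;=\; \Lt(M) + \bigl(\Lt(h_1), \ldots, \Lt(h_i)\bigr)F, \qquad 0 \le i \le u,
\]
which upgrades the regular-sequence hypothesis on initial terms to a regular-sequence conclusion on the $h_j$ themselves. Unfolding the definition of a regular sequence, the proposition reduces to two checks: first, that $M + (h_1,\ldots,h_u)F$ is a proper submodule of $F$, which follows at once by taking leading terms and invoking the analogous property on $F/\Lt(M)$; and second, that for each $1 \le i \le u$ multiplication by $h_i$ is injective on $F/(M + (h_1,\ldots,h_{i-1})F)$.

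I would establish the displayed leading-term identity by induction on $i$, the inclusion $\supseteq$ being immediate. For the nontrivial direction, take $g \in M + (h_1,\ldots,h_i)F$ and write $g = m + \sum_{j=1}^i p_j h_j$ with $m \in M$. If the leading terms on the right-hand side do not all cancel, then $\Lt(g)$ is visibly an element of $\Lt(M) + (\Lt(h_1),\ldots,\Lt(h_i))F$. Otherwise, the regular-sequence hypothesis on $\Lt(h_1),\ldots,\Lt(h_i)$ modulo $\Lt(M)$, combined with the inductive instance of the identity, allows one to rewrite the sum $\sum p_j h_j$ so that a strictly smaller leading term appears; well-ordering of monomials then terminates the reduction.

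With the identity in hand, the injectivity step becomes a minimal-counterexample argument. Suppose there exists $g \notin M + (h_1,\ldots,h_{i-1})F$ with $h_i g \in M + (h_1,\ldots,h_{i-1})F$, and choose such a $g$ with $\Lt(g)$ minimal. By the identity for $i-1$,
\[
\Lt(h_i)\,\Lt(g) \;=\; \Lt(h_i g) \;\in\; \Lt(M) + \bigl(\Lt(h_1),\ldots,\Lt(h_{i-1})\bigr)F,
\]
so the regular-sequence hypothesis on the initial terms forces $\Lt(g) \in \Lt(M) + (\Lt(h_1),\ldots,\Lt(h_{i-1}))F$. Using the identity once more, I can produce $g_0 \in M + (h_1,\ldots,h_{i-1})F$ with $\Lt(g_0) = \Lt(g)$, so that $g - g_0$ has strictly smaller leading term, is still annihilated by $h_i$ modulo the submodule, yet still lies outside it, contradicting minimality.

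The main obstacle is the leading-term identity itself: the $\subseteq$ direction is exactly where the regular-sequence hypothesis on initial terms must be consumed, and the cancellation bookkeeping has to be organized so that each reduction strictly decreases the monomial invariant. This mirrors the standard Buchberger-style reasoning for Gr\"obner bases, and one must be careful to have the inductive instance of the identity in place before the regular-sequence condition is applied at the next level.
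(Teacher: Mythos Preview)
The paper does not supply a proof of this proposition; it merely records it as a citation to \cite[Proposition~15.15]{Eisenbud95} and then passes immediately to the special case $F=S$, $M=0$ needed later. So there is no ``paper's own proof'' to compare against.

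Your sketch is correct and is in fact essentially Eisenbud's argument: the heart of the matter is exactly the initial-module identity
\[
\Lt\bigl(M+(h_1,\ldots,h_i)F\bigr)=\Lt(M)+\bigl(\Lt(h_1),\ldots,\Lt(h_i)\bigr)F,
\]
established by induction on $i$, after which the nonzerodivisor claim for $h_i$ follows by the minimal-leading-term argument you describe. The one place where your outline is a bit loose is the cancellation step in the proof of the identity: rather than ``rewriting so that a strictly smaller leading term appears'', it is cleaner to first reduce the last coefficient $p_i$ modulo a Gr\"obner basis of $M+(h_1,\ldots,h_{i-1})F$ (whose initial module you already know by the inductive instance of the identity); once $p_i$ has no term in $\Lt(M)+(\Lt(h_1),\ldots,\Lt(h_{i-1}))F$, the nonzerodivisor hypothesis on $\Lt(h_i)$ forbids cancellation of the top term outright, and the inclusion follows without a further descent.
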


Taking $F=S=\fq[X_1,\ldots,X_n]$ and $M=\{\boldsymbol{0}\}$ in the above proposition, we obtain the following result.
\begin{proposition} Let $\fq[X_1,\cdots,X_n]$ endowed with a fixed monomial order. 
   Suppose that $h_1, \dots, h_u \in \mathbb{F}_q[X_1,\dots,X_n]$ are such that their leading terms 
$\Lt(h_1), \dots, \Lt(h_u)$ form a regular sequence in $\mathbb{F}_q[X_1,\dots,X_n]$.  
Then $h_1, \dots, h_u$ themselves form a regular sequence in $\mathbb{F}_q[X_1,\dots,X_n]$.
\end{proposition}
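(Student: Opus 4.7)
The statement is an immediate specialization of Proposition \ref{Prop1}, as the preamble already indicates, so the plan is simply to unpack what that specialization amounts to and check that the hypotheses line up.

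I would take the free module $F = S = \fq[X_1,\ldots,X_n]$ (viewed as a free $S$-module of rank one, with monomial basis $\{1\}$ and the monomial order of $S$ transported to $F$ in the obvious way), and set $M = \{\boldsymbol{0}\} \subset F$. Then $\Lt(M) = \{\boldsymbol{0}\}$, so both quotients appearing in Proposition \ref{Prop1} collapse: $F/\Lt(M) \cong S$ and $F/M \cong S$. Under this identification, being a regular sequence on $F/\Lt(M)$ or on $F/M$ is exactly being a regular sequence on $S$ in the usual ring-theoretic sense.

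Thus, assuming $\Lt(h_1), \ldots, \Lt(h_u)$ is a regular sequence in $S$, the hypothesis of Proposition \ref{Prop1} is satisfied, and its conclusion yields that $h_1, \ldots, h_u$ is a regular sequence in $S$, which is what we want.

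There is essentially no obstacle: the only thing worth double-checking is that the notion of regular sequence on the module $F = S$ coincides with the notion of regular sequence in the ring $S$, which is immediate from the definition (non-zerodivisor on $F/(h_1,\ldots,h_{i-1}) = S/(h_1,\ldots,h_{i-1})$, together with properness of the generated ideal). Hence no further argument beyond invoking Proposition \ref{Prop1} with $F=S$ and $M=\{\boldsymbol{0}\}$ is needed.
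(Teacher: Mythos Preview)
Your proposal is correct and matches the paper's approach exactly: the paper derives the proposition from Proposition~\ref{Prop1} precisely by taking $F=S=\fq[X_1,\ldots,X_n]$ and $M=\{\boldsymbol{0}\}$, and you have simply spelled out why this specialization works.
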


\section{Generalized Markoff-Hurwitz-type equations}

Let $n \geq 3, m\ge 2$ and $k\ge2 $ be integers. Let $a,b, a_i\in \fq\setminus\{0\},$ for all $1\le i \le n$. Suppose that char($\fq$) does not divide $mk$ and that $mk>n$. We consider the equation defined in \eqref{MH}, namely
\begin{equation*}
   (a_1X_1^{m}+ \dots +a_nX_n^m+a)^k=bX_1\dots X_n.
\end{equation*}

Let $N$ denote the number of $\fq$-rational solutions of $\eqref{MH}$. Throughout this paper, we set $$f = (a_1X_1^m+\cdots+a_nX_n^m+a)^k-bX_1\cdots X_n$$ and let $V_f=V(f)  \subset \A^n$ be the affine hypersurface defined by $f$. We denote by $\mathrm{sing}(V_f)$ the set of singular points of $V_f$, that is, the set of singular points of the hypersurface. 

    \begin{theorem} \label{prop: singular locus V afin}
%Let $n\ge 2$, $m \ge 1$ and $k \ge 2$.
%Let    $f = (a_1X_1^m+\dots+a_nX_n^m+a)^k-bX_1\dots X_n\in \mathbb{F}_q[X_1,\dots,X_n]$.
%such that char$(\mathbb{F}_q)$ does not divide $m$. %For $1\le i \le n$ suppose that  $k_i\le m$ and $k_1+\dots+ k_n \le km$. 
The dimension of $\mathrm{sing}(V_f)$ is at most $n-3$.
\end{theorem}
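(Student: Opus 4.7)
The plan is to analyze $\mathrm{sing}(V_f)$ by splitting into two cases depending on whether the polynomial $g := a_1X_1^m+\cdots+a_nX_n^m+a$ vanishes at a given singular point. Writing $P := X_1\cdots X_n$ and $P_i := \prod_{j\ne i}X_j$, I would first record that
$$\frac{\partial f}{\partial X_i} \;=\; km\,a_i\,X_i^{m-1}g^{k-1} \;-\; b\,P_i,\qquad 1\le i\le n,$$
so that a point $x\in\A^n$ lies in $\mathrm{sing}(V_f)$ iff $f(x)=0$ and all these $n$ derivative equations vanish at $x$. Both cases use the hypotheses $a,b,a_i\ne 0$, $m,k\ge 2$, $\mathrm{char}(\fq)\nmid mk$, and $mk>n$; in particular, $km\ne 0$ in $\fq$ throughout.

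\emph{Case $g(x)=0$.} Since $k\ge 2$, the factor $g^{k-1}$ vanishes at $x$, so each equation $\partial f/\partial X_i=0$ reduces to $b\,P_i(x)=0$, i.e.\ $P_i(x)=0$. A short inspection shows that the simultaneous vanishing of all $P_i$ forces at least two of the coordinates of $x$ to be zero. Hence the singular points in this case lie in $\bigcup_{i<j} V(X_i,X_j,g)$. On the codimension-$2$ linear subspace $\{X_i=X_j=0\}$ the polynomial $g$ restricts to $\sum_{\ell\ne i,j}a_\ell X_\ell^m+a$; this is nonzero and nonconstant since $a\ne 0$ and $n-2\ge 1$, so it cuts out a subvariety of codimension one there. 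Therefore each $V(X_i,X_j,g)$, and hence their union, has dimension at most $n-3$.

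\emph{Case $g(x)\ne 0$.} I would first show that every coordinate $x_i$ is nonzero. If $x_\ell=0$, then $P_i(x)=0$ for all $i\ne\ell$, and $\partial f/\partial X_i=0$ combined with $g(x)^{k-1}\ne 0$ and $m\ge 2$ forces $x_i=0$; so $x_j=0$ for all $j\ne\ell$, making $P(x)=0$ and $f(x)=g(x)^k$, which contradicts $f(x)=0$ and $g(x)\ne 0$. With all $x_i\ne 0$, I would multiply $\partial f/\partial X_i=0$ by $x_i$ to obtain $km\,a_ix_i^m g(x)^{k-1}=bP(x)$ for every $i$; since the right-hand side is independent of $i$ and $km\,g(x)^{k-1}\ne 0$, the value $c:=a_ix_i^m$ does not depend on $i$. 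Then $g(x)=nc+a$, and substituting $bP(x)=km\,c\,g(x)^{k-1}$ into $f(x)=g(x)^k-bP(x)=0$ gives $g(x)^{k-1}\bigl(g(x)-km\,c\bigr)=0$; since $g(x)\ne 0$, we conclude $g(x)=km\,c$, whence $(mk-n)c=a$. The hypothesis $mk>n$ thus pins down $c=a/(mk-n)$ uniquely, and each equation $x_i^m=c/a_i$ has only finitely many solutions. So this case contributes at most finitely many singular points.

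Combining both cases yields $\dim\mathrm{sing}(V_f)\le n-3$, as required. The main obstacle, in my view, is the argument in Case $2$ that $g(x)\ne 0$ already implies every coordinate is nonzero, since this is what decisively uses $m\ge 2$ and $a\ne 0$; once that step is in place, the remaining derivations (that $c=a_ix_i^m$ is independent of $i$, solving $(mk-n)c=a$, and bounding each component to finitely many values) are routine algebraic manipulations.
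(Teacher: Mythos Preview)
Your proof is correct and close in spirit to the paper's, which splits according to whether some coordinate vanishes rather than whether $g$ does; for singular points of $V_f$ these two conditions coincide, so the decompositions agree. The one substantive methodological difference is the endgame in your Case~2: you solve $(mk-n)c=a$ explicitly and conclude that there are only \emph{finitely many} singular points with $g(x)\ne 0$, whereas the paper merely embeds these points in $W=V\bigl(a_1(n-km)X_1^m+a,\;a_1X_1^m-a_2X_2^m,\;a_1X_1^m-a_3X_3^m\bigr)$ and then invokes a leading-term/regular-sequence argument (their Proposition~\ref{Prop1}, from Eisenbud) to show $\dim W=n-3$. Your route is more elementary, avoids that machinery, and yields the sharper bound $\dim\le 0$ in this case.

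One small wording issue: the integer inequality $mk>n$ does not by itself guarantee $mk-n\ne 0$ in $\fq$ (e.g.\ $m=2$, $k=5$, $n=3$, $p=7$). If $p\mid mk-n$, the equation $(mk-n)c=a$ has no solution since $a\ne 0$, and Case~2 then contributes nothing; so your conclusion is unaffected, but the sentence ``the hypothesis $mk>n$ thus pins down $c$'' should be reworded to cover this possibility. (The paper's leading-term computation for $W$ has the analogous hidden case.)
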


\begin{proof}
    The partial derivatives of $f$ are given by
    \[\frac{\partial f}{\partial X_j} = k(a_1X_1^{m}+\cdots +a_nX_n^m+a)^{k-1} \cdot m a_jX_j^{m-1} - bX_1\cdots X_{j-1}\cdot  X_{j+1}\cdots  X_n\]
    for $1\le j \le n$.  Let $\boldsymbol{x}\in \mathrm{sing}(V_f)$. Then $\boldsymbol{x}\in V(f,\frac{\partial f}{\partial X_1}, \dots, \frac{\partial f}{\partial X_n})$. Therefore $\boldsymbol{x}$ satisfies the following equations:

  $$(a_1X_1^m+\cdots+a_nX_n^m+a)^k-bX_1\cdots X_n=0,$$
       $$E_j:\,=\, kma_jX_j^{m-1}\,(a_1X_1^{m}+\cdots +a_nX_n^m+a)^{k-1}- bX_1\cdots X_{j-1} X_{j+1}\cdots X_n=0,$$
%    \begin{align*}
%        (a_1X_1^m+\cdots+a_nX_n^m+a)^k-bX_1\cdots X_n&=0,\\ \notag
%       E_j:\,=\, kma_jX_j^{m-1}\,(a_1X_1^{m}+\cdots +a_nX_n^m+a)^{k-1}\! -\! bX_1\cdots X_{j-1} X_{j+1}\cdots X_n&=0,
%    \end{align*}
for $1\leq j \leq n$.
    
    Suppose that there exists $1\leq j \leq n$ such that $x_j=0$. Then $\boldsymbol{x}\in V_j:=V(X_j, X_{\ell}, a+\sum_{i \neq j,\ell} a_iX_i^m)$ for some $\ell \neq j$.  It is easy to see that $\{X_j,X_l,a+\sum_{i \neq j,\ell} a_iX_i^m\}$ form a regular sequence in $\mathbb{F}_q[X_1,\ldots,X_n]$ which implies that  $\dim (V_j)=n-3$ for all $1\leq j \leq n$.

\begin{comment}    Now suppos that $x_j \neq 0$ for all $1 \leq j \leq n$. Then $L(\boldsymbol{x}) \neq 0$, where $L=a_1X_1^{m}+\cdots +a_nX_n^m+a.$ 
    Therefore $\boldsymbol{x}$ satisfies the following equations:
    %\begin{align}
   %  &L^k-bx_1^{k_1}\cdots x_n^{k_n}=0\\ \notag
   %  &kL^{k-1}\cdot m a_jx_j^{m-1} - bk_jx_1^{k_1}\cdots x_j^{k_j-1}\cdots n_n^{k_n}=0,\quad 1\leq j \leq 3.
   % \end{align}
 % We deduce that $\boldsymbol{x}$ satisfies the following equations:
  \begin{equation*}
  L(\boldsymbol{x})^{k-1} \cdot(km \cdot a_j x_j^{m}-L(\boldsymbol{x}))=0,  \quad 1 \leq j \leq 4.
  \end{equation*}
  
  Since $L(\boldsymbol{x}) \neq 0$, we deduce that
  \begin{equation*}
  a_1x_1^m-a_2x_2^m=0, \quad a_1x_1^m-a_3x_3^m=0, \quad a_1x_1^m-a_4x_4^m=0. 
  \end{equation*}

  Observe that the variety $W:=V(a_1X_1^m-a_2X_2^m,a_1X_1^m-a_3X_3^m, a_1X_1^m-a_4X_4^m )$ has dimension $n-3$. Indeed, we consider in  $\mathbb{F}_q[X_1,\dots,X_n]$ the graded lexicographic order with $X_n>\cdots > X_1$. Then we have that
    \begin{align*}
        \Lt\left(a_1X_1^m-a_2X_2^m \right)&= a_2X_2^m \\
        \Lt\left(a_1X_1^m-a_3X_3^m \right) &= a_3X_3^m\\
        \Lt\left(a_1X_1^m-a_4X_4^m \right) &= a_4X_4^m.
    \end{align*}
Since these leading terms are relatively prime in $\mathbb{F}_q[X_1,\dots,X_n]$, they form a Gröbner basis for the ideal that they generate.
    From Proposition \ref{Prop1} we conclude that $a_1X_1^m-a_2X_2^m$, $a_1X_1^m-a_3X_3^m, a_1X_1^m-a_4X_4^m $ form a regular sequence in $\mathbb{F}_q[X_1,\dots,X_n]$. Therefore $W$ has dimension $n-3$, and the statement of the theorem follows.

\end{comment}

    Now suppose that $x_j \neq 0$ for all $1 \leq j \leq n$. We define the polynomial $g=a_1X_1^{m}+\cdots +a_nX_n^m+a$ and we claim that $g(\boldsymbol{x}) \neq 0$. Indeed, if $g(\boldsymbol{x})=0$, since $f(\boldsymbol{x})=0$ we obtain that $bx_1\cdots x_n=0$, which is a contradiction.
    Therefore $\boldsymbol{x}$ satisfies the equations
   
  \begin{equation*}
  f(\boldsymbol{x}) - x_j\frac{\partial f}{\partial X_j}(\boldsymbol{x}) = g(\boldsymbol{x})^{k-1} \cdot(g(\boldsymbol{x})- km \cdot a_j x_j^{m})=0, \text{ for } 1 \leq j \leq n.
  \end{equation*}
    Since $g(\boldsymbol{x}) \neq 0$ deduce that     \[g(\boldsymbol{x})- km \cdot a_j x_j^{m} = 0, \text{ for } 1\leq j \leq n.\]

    On the other hand, observe that  $x_1E_1-x_jE_j=km(a_1x_1^m-a_jx_j^m)\cdot g(\boldsymbol{x})^{k-1}=0$, $2\leq j \leq n $. Taking into account that $g(\boldsymbol{x}) \neq 0$, we obtain that 
    \[a_1x_1^m - a_jx_j^m=0,\ \text{ for }2\leq j \leq n.\]
    Hence 
    \[g(\boldsymbol{x})- km \cdot a_1 x_1^{m} + \sum_{j=2}^n (a_1x_1^m - a_jx_j^m) = a_1(n-km) x_1^m +a = 0,\]
    and $\boldsymbol{x}\in W:=V(a_1(n-km) X_1^m +a,a_1X_1^m-a_2X_2^m,a_1X_1^m-a_3X_3^m)$, which has dimension $n-3$. Indeed, we consider in  $\mathbb{F}_q[X_1,\dots,X_n]$ the graded lexicographic order with $X_n>\cdots > X_1$. Then we have that
    
    \begin{itemize}
       \item  $\Lt(a_1(n-km) X_1^m +a)= a_1(n-km)X_1^m$, 
        \item $\Lt(a_1X_1^m-a_2X_2^m )= a_2X_2^m$, 
        \item $\Lt(a_1X_1^m-a_3X_3^m) = a_3X_3^m$.
    \end{itemize}
    
Since these leading terms are relatively prime in $\mathbb{F}_q[X_1,\dots,X_n]$, they form a Gröbner basis of the ideal that they generate.
    From Proposition \ref{Prop1} we conclude that $a_1(n-km) X_1^m +a, a_1X_1^m-a_2X_2^m$, $a_1X_1^m-a_3X_3^m$ form a regular sequence in $\mathbb{F}_q[X_1,\dots,X_n]$. Therefore  
    $W$ has dimension $n-3$, completing the proof.
\end{proof}

We also require information about the behavior of $V_f$ at ``infinity." For this purpose, we consider the projective closure $\mathrm{pcl}(V_f) \subset \mathbb{P}^n$. It is well known that $\mathrm{pcl}(V_f)$ is the $\mathbb{F}_q$-hypersurface of $\mathbb{P}^n$ defined by the homogenization $f^h\in\mathbb{F}_q[X_0,X_1,\dots,X_n]$ of the polynomial $f$ (see, e.g., \cite[\S I.5, Exercise 6]{Kunz85}). Indeed, $\mathrm{pcl}(V_f)=V(f^h)$ and $f^h =(a_1X_1^m+\cdots+a_n X_n^{m}+aX_0^m)^k-bX_0^{mk-n}X_1\cdots X_n.$

On the other hand, we shall need to study the geometric properties of the projective variety $V_f^{\infty}:=\mathrm{pcl}(V_f)\cap \{X_0=0\}\subset \Pp^{n-1}$.
\begin{proposition}\label{prop: singular locus V infinito}
 $V_f^{\infty}$ is a non-singular hypersurface of $\Pp^{n-1}$. 
\end{proposition}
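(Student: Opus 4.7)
The plan is to write down explicitly the defining equation of $V_f^{\infty}$ and then verify non-singularity by checking the Jacobian criterion directly.

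First I would compute the homogenization. Since $mk>n$, the exponent $mk-n$ is a positive integer, so
$$f^h = (a_1X_1^m+\cdots+a_nX_n^m+aX_0^m)^k - bX_0^{mk-n}X_1\cdots X_n.$$
Setting $X_0=0$, the monomial $bX_0^{mk-n}X_1\cdots X_n$ vanishes, so $V_f^{\infty}$ is cut out in $\Pp^{n-1}$ by $(a_1X_1^m+\cdots+a_nX_n^m)^k=0$. As a (reduced) variety, this equals $V(h)\subset\Pp^{n-1}$, where $h:=a_1X_1^m+\cdots+a_nX_n^m$. In particular $V_f^{\infty}$ is a hypersurface of $\Pp^{n-1}$.

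Next I would apply the Jacobian criterion. The partial derivatives of $h$ are
$$\frac{\partial h}{\partial X_j} = m\,a_j\,X_j^{m-1}, \qquad 1\le j\le n.$$
A point $\boldsymbol{x}=(x_1:\cdots:x_n)\in V(h)$ is singular if and only if all these partials vanish at $\boldsymbol{x}$. Using the hypotheses $p\nmid mk$ (hence $p\nmid m$) and $a_j\neq 0$ for all $j$, the vanishing of $ma_jx_j^{m-1}$ forces $x_j=0$ for every $j$. But $(0:\cdots:0)$ is not a point of $\Pp^{n-1}$, so $V(h)$ has no singular points in projective space. Therefore $V_f^{\infty}$ is a non-singular hypersurface of $\Pp^{n-1}$, as claimed.

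I do not anticipate a serious obstacle in this argument; the only subtlety is the pedantic point that $f^h\big|_{X_0=0}$ is a $k$-th power, which would matter if one took a scheme-theoretic point of view, but under the set-theoretic notion of variety used in Section~2 this just means that $V_f^{\infty}$ coincides with the reduced hypersurface $V(h)$, and the rest is the standard Jacobian check for a Fermat-type hypersurface in characteristic coprime to $m$.
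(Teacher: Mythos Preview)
Your proof is correct and follows essentially the same approach as the paper: both identify $V_f^{\infty}$ with the Fermat-type hypersurface $V(a_1X_1^m+\cdots+a_nX_n^m)\subset\Pp^{n-1}$ (the paper writes this as $V(g-a)$), and both then apply the Jacobian criterion, using $p\nmid m$ and $a_j\neq 0$ to conclude that the only common zero of the partials is the excluded origin. Your remark about the $k$-th power is a nice clarification that the paper leaves implicit.
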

 \begin{proof}
     We have that $V_f^{\infty}=V(g-a) \subset \Pp^{n-1}$.
    Let $\mathrm{sing}(V_f^{\infty})$ be the singular locus of $V_f^{\infty}$. For a given $\boldsymbol{x}\in \mathrm{sing}(V_f^{\infty})$, we have that
     $\boldsymbol{x}$ satisfies the equations
     $$a_1X_1^m+\cdots+a_n X_n^{m}=0,\quad \frac{\partial (g-a)}{\partial X_j}=a_j m X_j^{m-1}=0, \,\,\,\, 1\leq j \leq n.$$
     
     Hence, we deduce that $V_f^{\infty}$ has no singular points.
 \end{proof}
\begin{theorem}\label{P2}
 $\mathrm{pcl}(V_f)$ has singular locus of dimension at most $n-3$.
%\[\dim(\mathrm{pcl} (\mathrm{sing}(V_f)) \le n-3.\]
\end{theorem}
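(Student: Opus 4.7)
The plan is to split $\mathrm{sing}(\mathrm{pcl}(V_f))$ into the affine piece $S_0:=\mathrm{sing}(\mathrm{pcl}(V_f))\cap \{X_0\neq 0\}$ and the piece at infinity $S_\infty:=\mathrm{sing}(\mathrm{pcl}(V_f))\cap \{X_0=0\}$, and to bound each by $n-3$. Under the standard affine chart $\{X_0\neq 0\}\cong \A^n$, the set $S_0$ is identified with $\mathrm{sing}(V_f)$, so Theorem~\ref{prop: singular locus V afin} immediately gives $\dim S_0\le n-3$.

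For $S_\infty$, the first observation is that $\mathrm{pcl}(V_f)\cap \{X_0=0\}$ equals $V_f^\infty$: substituting $X_0=0$ in $f^h$ yields $g_0^k$ with $g_0=\sum_i a_iX_i^m$, so $f^h(\boldsymbol{x})=0$ forces $g_0(\boldsymbol{x})=0$. By Proposition~\ref{prop: singular locus V infinito}, $V_f^\infty$ is smooth of dimension $n-2$ in $\{X_0=0\}\cong \Pp^{n-1}$, so it remains only to extract one extra equation from the Jacobian condition.

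Next I would compute the partial derivatives of $f^h$, getting $\partial f^h/\partial X_j = kma_j X_j^{m-1}g^{k-1} - bX_0^{mk-n}X_1\cdots\widehat{X_j}\cdots X_n$ for $1\le j\le n$, and $\partial f^h/\partial X_0 = kma X_0^{m-1}g^{k-1} - b(mk-n)X_0^{mk-n-1}X_1\cdots X_n$. Evaluating at $\boldsymbol{x}\in V_f^\infty$, where $g(\boldsymbol{x})=0$ and $x_0=0$, the partials $\partial f^h/\partial X_j$ for $j\ge 1$ vanish automatically because each summand carries either a factor $g^{k-1}$ or $X_0^{mk-n}$; the first term of $\partial f^h/\partial X_0$ vanishes for the same reason, while in the principal case $mk-n=1$ the second term reduces to $-b\,x_1\cdots x_n$ and the singularity condition forces $x_1\cdots x_n=0$. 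Hence $S_\infty\subset V_f^\infty\cap V(X_1\cdots X_n)=\bigcup_{j=1}^n V_f^\infty\cap V(X_j)$.

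Finally, each $V_f^\infty\cap V(X_j)$ is cut out in $\Pp^{n-1}$ by the two homogeneous polynomials $X_j$ and $g_0$. With the graded lexicographic order in which $X_n>\cdots>X_1$, their leading terms are coprime monomials, so by Proposition~\ref{Prop1} the pair $(X_j,g_0)$ forms a regular sequence, and $V_f^\infty\cap V(X_j)$ is a complete intersection of codimension $2$ in $\Pp^{n-1}$, hence of dimension $n-3$. Taking the union over $j$ preserves this bound and yields $\dim S_\infty\le n-3$, completing the proof. The main subtlety I anticipate is the regime $mk-n\ge 2$: there the second term of $\partial f^h/\partial X_0$ already carries a positive power of $X_0$ and vanishes identically on $\{X_0=0\}$, leaving no residual first-order constraint, so one must argue separately, for instance by analyzing the tangent cone of $f^h$ along $V_f^\infty$ and using the non-reducedness of the scheme defined by $f^h$ in that range, to recover the $n-3$ bound.
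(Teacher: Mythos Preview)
Your decomposition into $S_0$ and $S_\infty$, and the appeal to Theorem~\ref{prop: singular locus V afin} for $S_0$, match the paper exactly. For $S_\infty$ the paper takes a much shorter route: it cites \cite[Lemma~1.1]{GhLa02a} to get $\Sigma^\infty\subset\mathrm{sing}(V_f^\infty)$ and then concludes from Proposition~\ref{prop: singular locus V infinito} that there are \emph{no} singular points at infinity. Your explicit Jacobian computation is more informative and already shows this last conclusion is too optimistic: even when $mk-n=1$ you obtain $S_\infty=V_f^\infty\cap V(X_1\cdots X_n)$, which is typically nonempty (though of dimension $n-3$, so the bound survives). The discrepancy stems from the fact that the cited lemma concerns the \emph{scheme-theoretic} hyperplane section, which here is cut out by $g_0^k$ and is everywhere non-reduced for $k\ge 2$; transferring its conclusion to the reduced variety $V_f^\infty=V(g_0)$ is not justified.

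The subtlety you anticipate for $mk-n\ge 2$ is not a matter of finding a sharper argument: in that range the stated bound is actually false, so no tangent-cone analysis can rescue it. Your own computation already shows why: once $mk-n\ge 2$, the term $-b(mk-n)X_0^{mk-n-1}X_1\cdots X_n$ in $\partial f^h/\partial X_0$ also vanishes on $\{X_0=0\}$, so \emph{every} point of $V_f^\infty$ satisfies $\nabla f^h=0$, whence $V_f^\infty\subset\mathrm{sing}(\mathrm{pcl}(V_f))$ and $\dim\mathrm{sing}(\mathrm{pcl}(V_f))\ge n-2$. Concretely, for $n=3$, $m=2$, $k=3$, $a_i=a=b=1$ (with $\mathrm{char}\,\fq\neq 2,3$) one has $f^h=(X_0^2+X_1^2+X_2^2+X_3^2)^3-X_0^3X_1X_2X_3$, and every point of the smooth conic $\{X_0=0,\ X_1^2+X_2^2+X_3^2=0\}$ is singular on $V(f^h)$. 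What is really needed is either the extra hypothesis $mk=n+1$, or a replacement argument downstream that tolerates a singular locus of dimension $n-2$ at infinity.
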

\begin{proof}
Let $\Sigma^{\infty}=\mathrm{sing}(\mathrm{pcl}(V_f))\cap \{X_0=0\} \subset \Pp^n$ be the set of singular points of $\mathrm{pcl}(V_f)$ lying in the hyperplane $\{X_0=0\}.$ By  \cite[Lemma 1.1]{GhLa02a} we have that $\Sigma^{\infty}\subset \mathrm{sing}(V_f^{\infty})$ then, taking into account Proposition \ref{prop: singular locus V infinito}, we obtain that $\mathrm{pcl}(V_f)$ does not have singular points at infinity. So, from Proposition \ref{prop: singular locus V afin}, we deduce that the singular locus of $\mathrm{pcl}(V_f)$ has dimension at most $n-3$. 
\end{proof}

\begin{corollary} \label{coro: absoluta irreducibilidad}
    The hypersurfaces $V_f$  and $V_f^{\infty}$ are absolutely irreducible.
\end{corollary}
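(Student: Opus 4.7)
My plan is to invoke the classical principle that a projective hypersurface $V\subset\mathbb{P}^{N}$ of pure dimension $N-1$ satisfying $\dim\mathrm{sing}(V)\le\dim V-2$ is automatically absolutely irreducible. The reasoning: if $V$ decomposed over $\cfq$ as $V(F_1)\cup V(F_2)$ with $F_1,F_2$ coprime nonconstant homogeneous factors of a defining polynomial, then by the projective dimension theorem $V(F_1)\cap V(F_2)$ has dimension at least $N-2$, and differentiating $F_1F_2$ via the product rule shows every point of this intersection is singular on $V$, forcing $\dim\mathrm{sing}(V)\ge N-2=\dim V-1$, a contradiction.

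I would apply this criterion to $\mathrm{pcl}(V_f)\subset\mathbb{P}^{n}$, which is a hypersurface of dimension $n-1$: by Theorem \ref{P2}, $\dim\mathrm{sing}(\mathrm{pcl}(V_f))\le n-3=\dim\mathrm{pcl}(V_f)-2$, so the criterion applies and $\mathrm{pcl}(V_f)$ is absolutely irreducible. Since $V_f$ is the complement in $\mathrm{pcl}(V_f)$ of the proper closed subset cut out by $X_0=0$, it is Zariski-dense in $\mathrm{pcl}(V_f)$, and absolute irreducibility transfers to $V_f$. For $V_f^{\infty}\subset\mathbb{P}^{n-1}$, which has dimension $n-2$, Proposition \ref{prop: singular locus V infinito} shows it is nonsingular, so $\dim\mathrm{sing}(V_f^{\infty})=-1\le(n-2)-2$ (using $n\ge 3$) and the same criterion yields absolute irreducibility.

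The step that most deserves care in the write-up is the justification of the criterion itself, namely that two distinct components of a hypersurface must meet in codimension one in each and that their intersection lies in the singular locus; both are classical and follow from the projective dimension theorem together with the product rule, so I expect no serious technical obstacle. The only bookkeeping needed is to note that the degree-preserving homogenization $f^h$ makes $\mathrm{pcl}(V_f)$ a genuine hypersurface of $\mathbb{P}^n$ of pure dimension $n-1$, which is already established in the discussion preceding Proposition \ref{prop: singular locus V infinito}.
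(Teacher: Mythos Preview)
Your proposal is correct and follows essentially the same route as the paper's proof: both argue that if $\mathrm{pcl}(V_f)$ were reducible then two components would intersect in dimension $n-2$ and this intersection would lie in the singular locus, contradicting Theorem~\ref{P2}, and then treat $V_f^{\infty}$ analogously via Proposition~\ref{prop: singular locus V infinito}. Your write-up is in fact slightly more explicit than the paper's in naming the projective dimension theorem and the product-rule reason why the intersection of components is singular.
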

    \begin{proof}
We prove that $V_f$ is absolutely irreducible. The proof that $V_f^{\infty}$ is absolutely irreducible 
is similar. Let us observe that $V_f$ is absolutely irreducible if and only if $\mathrm{pcl}(V_f)$ is absolutely irreducible. Suppose that $\mathrm{pcl}(V_f)$ is not absolutely irreducible. Then, it has a nontrivial decomposition into absolutely irreducible components:
\[\mathrm{pcl}(V_f) = \mathcal C_1\cup \dots \cup \mathcal C_s,\]
where $\mathcal C_1, \dots, \mathcal C_s$ are projective hypersurfaces in $\mathbb{P}^n$. Since $\mathcal C_i \cap \mathcal C_j \neq \emptyset$ and $\mathcal C_i, \mathcal C_j$ are absolutely irreducible, we have $\dim(\mathcal C_i \cap \mathcal C_j) = n - 2$.

%Let $\Sigma_f^h$ denote the singular locus of $\mathrm{pcl}(V_f)$.
From Theorem \ref{P2}, we have $\dim \mathrm{sing}(\mathrm{pcl}(V_f)) \le n-3$. On the other hand, we have $\mathcal C_i \cap \mathcal C_j \subset \mathrm{sing}(\mathrm{pcl}(V_f))$ for any $i \neq j$, which implies $\dim \mathrm{sing}(\mathrm{pcl}(V_f)) \ge n - 2$, which is a contradiction.
    \end{proof}

\section{Estimates on the number of $\fq$--rational solutions of Markoff-Hurwitz type equations}
We are now interested in estimating the number of $\fq$--rational points of $V_f$, where $f = (a_1X_1^m+\dots+a_nX_n^m+a)^k-bX_1\dots X_n$.  So we need to estimate the number of $\fq$--rational points of $\mathrm{pcl}(V_f)$ and $V_f^{\infty}$. To do this, we shall use an estimate  due to \cite{CaMaPr2015} on the number of $\mathbb{F}_q$--rational points of a normal projective hypersurface. In \cite[Corollary 8.3]{CaMaPr2015}, the authors proved that, for an absolutely irreducible $\fq$--hypersurface $V\subset\mathbb{P}^{n}$ of degree $d\geq 2$ and singular locus of codimension at least $2$ in $V$, the number $|V(\fq)|$ of $\mathbb{F}_q$--rational points of $V$ satisfies the estimate:
\begin{equation}\label{eq: estimate CMP normal var}
\big||V(\fq)|-p_{n-1}\big|\leq(d-1)(d-2)q^{n-3/2}
+14(d-1)^2d^2q^{n-2},
\end{equation}
where $p_{n-1}= q^{n-1}+\dots+q+1$.

 From Theorem \ref{P2} and Corollary \ref{coro: absoluta irreducibilidad}, we know that $\mathrm{pcl}(V_f)$ is an absolutely irreducible hypersurface of degree $mk$ and singular locus of dimension at most $n-3$.
Hence, from \eqref{eq: estimate CMP normal var} we have that:
\begin{equation}\label{eq: estimacion pcl(v) case 2}
\big||\mathrm{pcl}(V_f)(\fq)|-p_{n-1}\big|\leq (mk-1)(mk-2)q^{n-3/2}+14(mk-1)^2(mk)^2 q^{n-2}.
\end{equation}
On the other hand, from Proposition \ref{prop: singular locus V infinito} we have that $V_f^{\infty}$ is a nonsingular variety. We can apply the following result due to P. Deligne (see, e.g., \cite{De74}): for a nonsingular ideal--theoretic complete intersection $V \subset \Pp^n$ defined over $\fq$, of dimension $r$ and multidegree ${\bf{d}}=(d_1, \ldots, d_n)$, the following estimate holds:
\begin{equation}\label{estimacion Deligne}
\big||V(\fq)|-p_r| \leq b'_r(n,{\bf{d}})q^{r/2},
\end{equation}	
	where $ b'_r(n,{\bf{d}})$ is the primitive $r$--th Betti number of any nonsingular complete intersection of $\Pp^n$ of dimension $r$ and multidegree ${\bf{d}}$ (see, e.g., \cite[Theorem 4.1]{GhLa02a} for an explicit expression of $ b'_r(n,{\bf{d}})$ in terms of $n,r$ and ${\bf{d}}$).
 
 From Theorem $4.1$ and Example $4.3$ in \cite{GhLa02a}, we have that the primitive $r$--th Betti number of nonsingular hypersurfaces in $\Pp^n$ of dimension $r$ and degree $d$ satisfies: 
\begin{equation} \label{eq:upper bound sums betti numbers}
b_{r}'(n,d) \leq \frac{d-1}{d} \left( (d-1)^{n} - (-1)^{n} \right) \leq (d-1)^{n}. 
\end{equation} 
Thus
	\begin{equation}\label{estimation V infty}
	\big||V_f^{\infty}(\fq)|-p_{n-2}\big| \leq (mk-1)^{n-1} q^{(n-2)/2}.
	\end{equation}

Observe that $|V_f(\fq)|=|\mathrm{pcl}(V_f)(\fq)|-|V_f^{\infty}(\fq)|$. From \eqref{eq: estimacion pcl(v) case 2} and \eqref{estimation V infty}, we have that: 
\begin{align*}
\big||V_f(\fq)|-q^{n-1}\big|\leq & \big||\mathrm{pcl}(V_f)(\fq)|-p_{n-1}\big|+\big||V_f^{\infty}(\fq)|-p_{n-2}\big|\\
\leq &(mk-1)(mk-2)q^{n-3/2}+14(mk-1)^2(mk)^2 q^{n-2}\\
& +(mk-1)^{n-1} q^{(n-2)/2}\\
\leq & (mk)^{\max\{n-1,4\}}q^{\frac{n-2}{2}}\Big(q^{\frac{n-1}{2}}+14 q^{\frac{n-2}{2}}+1\Big)\\
\leq & 9
(mk)^{\max\{n-1,4\}}q^{n-3/2}.
\end{align*}

This leads to the main result of this section.

\begin{theorem}\label{estimate}
    Suppose $k,m\geq 2$, $n \geq 3$, $a,b, a_i\in \fq^*$, $\operatorname{char}(\fq)\nmid mk$, and $n<mk$. Then the number $N$ of $\fq$--rational solutions of equation \eqref{MH} satisfies the following estimate:
  $$\big|N-q^{n-1}\big|\leq  9(mk)^{\max\{n-1,4\}}q^{n-3/2}.$$
\end{theorem}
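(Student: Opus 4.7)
The plan is to reduce the counting problem to two projective point-counts whose geometry was already controlled in Section~3, and then apply two off-the-shelf estimates. Since $N=|V_f(\fq)|$ and $V_f$ is the complement of $V_f^{\infty}$ inside $\mathrm{pcl}(V_f)$, I would start from
\begin{equation*}
|V_f(\fq)| \;=\; |\mathrm{pcl}(V_f)(\fq)|\;-\;|V_f^{\infty}(\fq)|,
\end{equation*}
and bound each term separately via the triangle inequality, exploiting the identity $p_{n-1}-p_{n-2}=q^{n-1}$ to recover the main term $q^{n-1}$.

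For the projective closure, Theorem \ref{P2} shows that $\mathrm{sing}(\mathrm{pcl}(V_f))$ has codimension at least $2$ in $\mathrm{pcl}(V_f)$, and Corollary \ref{coro: absoluta irreducibilidad} supplies absolute irreducibility; since $\deg(\mathrm{pcl}(V_f))=mk$, the Cafure--Matera--Privitelli bound \eqref{eq: estimate CMP normal var} applies and yields
\begin{equation*}
\bigl||\mathrm{pcl}(V_f)(\fq)|-p_{n-1}\bigr|\leq (mk-1)(mk-2)q^{n-3/2}+14(mk-1)^2(mk)^2\,q^{n-2}.
\end{equation*}
For the section at infinity, Proposition \ref{prop: singular locus V infinito} says $V_f^{\infty}$ is a nonsingular hypersurface of $\Pp^{n-1}$ of degree $mk$, so Deligne's estimate \eqref{estimacion Deligne} together with the Betti-number bound \eqref{eq:upper bound sums betti numbers} gives
\begin{equation*}
\bigl||V_f^{\infty}(\fq)|-p_{n-2}\bigr|\leq (mk-1)^{n-1}q^{(n-2)/2}.
\end{equation*}

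To finish, I would sum the two bounds, factor $q^{(n-2)/2}$ out of the resulting error, and replace every polynomial-in-$mk$ factor by $(mk)^{\max\{n-1,4\}}$. The three pieces then collapse to $(mk)^{\max\{n-1,4\}}q^{(n-2)/2}\bigl(q^{(n-1)/2}+14q^{(n-2)/2}+1\bigr)$, which is comfortably $\leq 9(mk)^{\max\{n-1,4\}}q^{n-3/2}$. The one non-routine choice is the exponent $\max\{n-1,4\}$: it appears because the Cafure--Matera--Privitelli term is controlled by $(mk)^4$ (dominant for small $n$), whereas the Deligne term grows like $(mk)^{n-1}$ and dominates for large $n$; unifying the two in a single clean constant is the only place where care is needed, and it amounts to bookkeeping rather than a genuine obstacle. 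All the real work — verifying absolute irreducibility, the normality of the projective closure, and the nonsingularity of the section at infinity — has already been carried out in Section~3.
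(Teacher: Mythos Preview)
Your proposal is correct and follows the paper's proof essentially line by line: the same decomposition $|V_f(\fq)|=|\mathrm{pcl}(V_f)(\fq)|-|V_f^{\infty}(\fq)|$, the Cafure--Matera--Privitelli bound for the projective closure, Deligne's bound for the section at infinity, and the identical final bookkeeping leading to $(mk)^{\max\{n-1,4\}}q^{(n-2)/2}\bigl(q^{(n-1)/2}+14q^{(n-2)/2}+1\bigr)\le 9(mk)^{\max\{n-1,4\}}q^{n-3/2}$. One small remark: the reduced hypersurface $V_f^{\infty}=V(a_1X_1^m+\cdots+a_nX_n^m)$ actually has degree $m$, not $mk$, but since $(m-1)^{n-1}\le(mk-1)^{n-1}$ your stated bound is still valid, and in fact the paper uses the same weaker form.
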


\subsection{The case with nonzero coordinates}
Let $N^*$ denote the number of $\mathbb{F}_q$--rational solutions with nonzero coordinates for equation \eqref{MH}. In what follows we obtain an estimate for $N^*$.

%\begin{equation}\label{eq7}
%    (a_1X_1^m + \dots + a_nX_n^m+a)^k = bX_1 \cdots X_n,
%\end{equation}
%with \( a, b \neq 0 \), \( k \ge 2 \), \( n \ge 3 \), and \( \gcd(m, q-1) = 1 \). 
Let \( N^= \) denote the number of $\mathbb{F}_q$--rational solutions of \eqref{MH} with at least one coordinate equal to zero. Applying the inclusion-exclusion principle, we obtain
\[
N^= = \sum_{i=1}^n (-1)^{i+1} \binom{n}{i} N_i,
\]
where \( N_i \) represents the number of $\mathbb{F}_q$--rational solutions with $x_{j_1} = x_{j_2} = \cdots = x_{j_i} = 0$, for $1\leq j_1 < j_2 < \cdots < j_i \leq n$. Therefore,
\[
N^* = N - N^=.
\]
The following result will be useful for estimating \( N^* \).

\begin{proposition}\label{estimatediagonal}
    For \( 1 \le i \le n-1 \), we have \[
    \left| N_i - q^{n-i-1} \right| \le (m-1)^{n-i} q^{(n-i-2)/2} (1 + q^{1/2}).
    \]
\end{proposition}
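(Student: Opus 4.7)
The plan is to reduce the computation of $N_i$ to counting $\fq$-rational points on a diagonal Fermat-type affine hypersurface, and then to apply Deligne's bound \eqref{estimacion Deligne} to its projective closure and to its part at infinity. Fix an index set $J = \{j_1 < \cdots < j_i\} \subset \{1, \ldots, n\}$ and substitute $X_j = 0$ for $j \in J$ into Eq.~\eqref{MH}. The monomial $bX_1\cdots X_n$ then vanishes, so the equation reduces to $\bigl(\sum_{l \notin J} a_l X_l^m + a\bigr)^k = 0$, which in the field $\fq$ is equivalent to the diagonal relation $g_J := \sum_{l \notin J} a_l X_l^m + a = 0$. Writing $r := n-i$, I conclude that $N_i = |W(\fq)|$, where $W = V(g_J) \subset \A^{r}$.

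Next I would pass to $\mathrm{pcl}(W) \subset \Pp^{r}$, cut out by the homogenization $\sum_{l \notin J} a_l X_l^m + a X_0^m$, and to its part at infinity $W^\infty \subset \Pp^{r-1}$, cut out by $\sum_{l \notin J} a_l X_l^m$. Both are Fermat-type hypersurfaces of degree $m$; exactly as in the proof of Proposition~\ref{prop: singular locus V infinito}, the hypothesis $\operatorname{char}(\fq) \nmid m$ together with the nonvanishing of all coefficients forces the partial derivatives $m a_l X_l^{m-1}$ (and $m a X_0^{m-1}$ in the case of $\mathrm{pcl}(W)$) not to vanish simultaneously at any projective point, so both projective hypersurfaces are nonsingular.

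Plugging each into Deligne's bound \eqref{estimacion Deligne} combined with the Betti number estimate \eqref{eq:upper bound sums betti numbers} yields
\[
\bigl||\mathrm{pcl}(W)(\fq)| - p_{r-1}\bigr| \le (m-1)^{r} q^{(r-1)/2}, \qquad \bigl||W^\infty(\fq)| - p_{r-2}\bigr| \le (m-1)^{r-1} q^{(r-2)/2}.
\]
Since $N_i = |\mathrm{pcl}(W)(\fq)| - |W^\infty(\fq)|$ and $p_{r-1} - p_{r-2} = q^{r-1}$, the triangle inequality together with the crude bound $(m-1)^{r-1} \le (m-1)^{r}$ (valid since $m \ge 2$) gives the desired estimate $|N_i - q^{r-1}| \le (m-1)^{r} q^{(r-2)/2}(1 + q^{1/2})$. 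The only point I expect to require care is the boundary case $r = 1$ (i.e.\ $i = n-1$), where $W^\infty \subset \Pp^0$ is cut out by $a_{l_1} X^m$ with $a_{l_1} \neq 0$ and is therefore empty; there the second contribution in the combined estimate vanishes and the single Deligne bound on $\mathrm{pcl}(W)$ alone already delivers the result. This edge case is the main bookkeeping subtlety; the remainder is a direct application of tools already developed in the paper.
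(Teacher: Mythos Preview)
Your argument is correct. Both you and the paper make the same first move---setting $x_{j_1}=\cdots=x_{j_i}=0$ collapses \eqref{MH} to the diagonal equation $\sum_{l\notin J}a_lX_l^m=-a$ in $r=n-i$ variables---but from there the paper simply invokes Remark~4.4 of \cite{Melina&Mariana2020} as a black box, whereas you supply a self-contained proof by applying Deligne's estimate \eqref{estimacion Deligne} (via \eqref{eq:upper bound sums betti numbers}) to the nonsingular Fermat hypersurfaces $\mathrm{pcl}(W)\subset\Pp^r$ and $W^\infty\subset\Pp^{r-1}$ and combining the two by the triangle inequality. Your treatment of the edge case $r=1$ is also correct: $W^\infty\subset\Pp^0$ is empty, so only the bound on $\mathrm{pcl}(W)$ contributes. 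In short, your proof is a fully unpacked version of what the paper cites externally, using only tools already available in the present paper.
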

\begin{proof}
    Let \( i \in \{1, \dots, n-1\} \) and suppose that \( x_1 = \dots = x_i = 0 \). Then, equation \eqref{MH} reduces to a diagonal equation in \( n-i \) coordinates with \( a \neq 0 \), specifically,
    \[
    a_{i+1}X_{i+1}^m + \dots + a_nX_n^m = -a.
    \]
    By Remark 4.4 in \cite{Melina&Mariana2020}, we obtain
    \begin{align*}
   \left|  N_i - q^{n-i-1} \right|& \le (m-1)^{n-i} q^{(n-i-2)/2} (1 + q^{1/2})\\
    & \leq 2m^{n-i}q^{(n-i-1)/2}. \qedhere
    \end{align*} 
    
\end{proof}

\begin{remark}\label{Remark}

Note that, when \( a \neq 0 \) and \( \gcd(m, q-1) = 1 \), the equation \( X^m = a \) has a unique solution in \( \mathbb{F}_q \). This implies \( N_{n-1} = 1 \). Additionally, observe that \( N_n = 0 \), as in this case all coordinates are zero and $a\neq0$.
\end{remark}

From this we can rewrite the expression for \( N^= \) as 
\begin{align*}
    N^= &=  \sum_{i=1}^{n} (-1)^{i+1}\binom{n}{i} N_i \\
    &= \sum_{i=1}^{n-2} (-1)^{i+1} \binom{n}{i} N_i + (-1)^n \binom{n}{n-1} N_{n-1} + (-1)^{n+1} \binom{n}{n} N_n \\
    &= \sum_{i=1}^{n-2} (-1)^{i+1} \binom{n}{i} N_i + (-1)^n \cdot n.
\end{align*}

From this relation, we obtain
\begin{align}\label{O1}\nonumber
    N^* &= N - N^= = N - \sum_{i=1}^{n-2} (-1)^{i+1} \binom{n}{i} N_i - (-1)^n \cdot n \\ \nonumber
    &= N + \sum_{i=1}^{n-2} (-1)^i \binom{n}{i} N_i - (-1)^n \cdot n \\
    &= (N - q^{n-1}) + \sum_{i=1}^{n-2} (-1)^{i+1} \binom{n}{i} (N_i - q^{n-i-1}) \\ \notag
    &\quad + \sum_{i=0}^{n-2} (-1)^i \binom{n}{i} q^{n-i-1}  - (-1)^n \cdot n.
\end{align}

First, we can write
\begin{align*}
    \sum_{i=0}^n (-1)^i \binom{n}{i} q^{n-i-1} &= \frac{1}{q} \sum_{i=0}^n (-1)^i \binom{n}{i} q^{n-i} = \frac{(q-1)^n}{q}.
\end{align*}
Using this expression, we obtain the following
\begin{align}\label{P1}\nonumber 
    \sum_{i=0}^{n-2} (-1)^i \binom{n}{i} q^{n-i-1} &= \frac{1}{q} \sum_{i=0}^{n-2} (-1)^i \binom{n}{i} q^{n-i} \\ \nonumber
    &= \frac{1}{q} \left( (q-1)^n - (-1)^{n-1} \binom{n}{n-1} q - (-1)^n \binom{n}{n} q^0 \right) \\ 
    &= \frac{(q-1)^n - (-1)^n}{q} + (-1)^n \cdot n.
\end{align}

Now we can estimate the number \( N^* \).

\begin{proposition} \label{estimation}
 Suppose $k,m\geq 2$, $n \geq 3$, $a,b, a_i\in \fq^*$, $\operatorname{char}(\fq) \nmid mk$, $n<mk$, and $\gcd(m,q-1)=1$. Then the number $N^*$ of nonzero $\mathbb{F}_q$--rational solutions of \eqref{MH} satisfies the following estimate:
  
 $$
  \left|N^*- \frac{(q-1)^n-(-1)^n}{q}\right| \le \begin{cases}
10(mk)^{4}q^{3/2} &\text{ if }\,\, n=3 \\
& \\
10(mk)^{4}q^{5/2} &\text{ if }\,\, n=4 \\
& \\
11(mk)^{n-1}q^{n-3/2}&\text{ if }\,\, n\geq 5.
 \end{cases}$$
\end{proposition}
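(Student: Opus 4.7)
The strategy is entirely algebraic: start from the rewriting of $N^*$ in \eqref{O1}, substitute the closed form \eqref{P1} for the tail $\sum_{i=0}^{n-2}(-1)^i\binom{n}{i}q^{n-i-1}$, and then apply the two available error bounds term by term. The $(-1)^n\cdot n$ contributions coming from \eqref{O1} and from \eqref{P1} cancel, so one arrives at the clean identity
\[
N^* - \frac{(q-1)^n-(-1)^n}{q} = (N-q^{n-1}) + \sum_{i=1}^{n-2}(-1)^{i+1}\binom{n}{i}\bigl(N_i - q^{n-i-1}\bigr).
\]
From here the triangle inequality reduces the task to estimating two kinds of errors: one for the full affine count and one for each ``diagonal slice''.

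\textbf{Key bounds.} The first term is controlled by Theorem \ref{estimate}, giving $|N-q^{n-1}|\le 9(mk)^{\max\{n-1,4\}}q^{n-3/2}$. Each term in the sum is controlled by Proposition \ref{estimatediagonal} in the form $|N_i-q^{n-i-1}|\le 2m^{n-i}q^{(n-i-1)/2}$. So the whole remainder is bounded by
\[
9(mk)^{\max\{n-1,4\}}q^{n-3/2} + 2\sum_{i=1}^{n-2}\binom{n}{i}m^{n-i}q^{(n-i-1)/2}.
\]
The plan is then to show that the sum $S:=2\sum_{i=1}^{n-2}\binom{n}{i}m^{n-i}q^{(n-i-1)/2}$ is absorbed into the leading term with a small constant loss, separately in the three regimes $n=3$, $n=4$, $n\ge 5$.

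\textbf{Case analysis.} For $n=3$ only $i=1$ appears, contributing $6m^{2}q$, and one checks directly that $6m^{2}q\le (mk)^{4}q^{3/2}$ since $m,k\ge 2$ and $q\ge 2$ force $m^{2}k^{4}q^{1/2}\ge 6$; adding this to $9(mk)^{4}q^{3/2}$ gives the claimed $10(mk)^{4}q^{3/2}$. For $n=4$ one has two terms, $8m^{3}q^{3/2}+12m^{2}q$, and a very similar inequality absorbs both into one extra copy of $(mk)^{4}q^{5/2}$, yielding $10(mk)^{4}q^{5/2}$. For $n\ge 5$, substitute $j=n-i$ and use the uniform bound $m^{j}q^{(j-1)/2}\le m^{n-1}q^{(n-2)/2}$ for $2\le j\le n-1$, together with $\sum_{j=2}^{n-1}\binom{n}{j}\le 2^{n}$, to get
\[
S\le 2^{\,n+1}\,m^{n-1}q^{(n-2)/2}.
\]
This is at most $2(mk)^{n-1}q^{n-3/2}$ precisely when $2^{n}\le k^{n-1}q^{(n-1)/2}$, which holds because $k,q\ge 2$ give $k^{n-1}q^{(n-1)/2}\ge 2^{n-1}\cdot 2=2^{n}$. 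Combining with the Theorem \ref{estimate} bound produces $11(mk)^{n-1}q^{n-3/2}$.

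\textbf{Expected obstacle.} The proof itself is not deep; the only delicate point is bookkeeping the constants so that the three case constants $10,10,11$ really come out, in particular ensuring that the $i=1$ term of $S$ (whose ratio to the leading term carries a factor $q^{1/2}$) is absorbed by the extra factor $k^{n-1}$ coming from the difference between $(mk)^{n-1}$ and $m^{n-1}$. The inequality $2^{n}\le k^{n-1}q^{(n-1)/2}$ is exactly what makes this work, and everything else is routine. No further geometric input beyond Theorem \ref{estimate}, Proposition \ref{estimatediagonal} and Remark \ref{Remark} is required.
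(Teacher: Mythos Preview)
Your proof is correct and follows essentially the same route as the paper: you combine \eqref{O1} with \eqref{P1} to isolate the error, apply Theorem \ref{estimate} and Proposition \ref{estimatediagonal} via the triangle inequality, and then absorb the diagonal sum $S$ into a single extra copy of the leading term, treating $n=3,4$ by hand and $n\ge 5$ uniformly. The only differences are bookkeeping: for $n\ge 5$ the paper factors $m^{n-1}q^{(n-1)/2}$ out of $S$ and bounds the remaining sum by $(1+q^{-1/2})^n\le 2^n$, whereas you bound each summand by the largest one; both give the same $2(mk)^{n-1}q^{n-3/2}$. One small slip: in the $n=3,4$ cases your displayed exponents of $q$ are too large (for $n=3$ the single term is $6m^{2}q^{1/2}$, not $6m^{2}q$; for $n=4$ one gets $8m^{3}q+12m^{2}q^{1/2}$), but since these are overestimates the absorption inequalities you state still hold, so the conclusion is unaffected.
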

\begin{proof}
From equations \eqref{O1} and \eqref{P1} we obtain
\begin{align*}
    N^* &= (N-q^{n-1})+ \sum_{i=1}^{n-2} (-1)^i\binom{n}{i} (N_i-q^{n-i-1}) \\
    & \quad + \frac{(q-1)^n-(-1)^n}q+(-1)^n \cdot n-(-1)^n \cdot n.
\end{align*}
Assume that $n\geq 5$. Using Theorem \ref{estimate} and Proposition \ref{estimatediagonal}, we obtain 
\begin{align}\label{estimateN*}\nonumber
    \left|N^{*}- \frac{(q-1)^n-(-1)^n}{q}\right|&\le \left|N-q^{n-1}\right|+ \sum_{i=1}^{n-2} \binom{n}{i}|N_i-q^{n-i-1}|\\
    &\le  9 (mk)^{n-1}q^{n-3/2}+ \sum_{i=1}^{n-2} \binom{n}{i}2 m^{n-i}q^{\frac{n-i-1}2}.
\end{align}

To derive a sharper estimate for $N^*$, we will derive an upper bound for the summation in the previous equation. For this purpose, we utilize the following fact
\begin{align*}\nonumber
    \left(1+\frac{1}{\sqrt q}\right)^n& =\sum_{i=0}^n \binom{n}{i}\left(\frac{1}{\sqrt{q}}\right)^i \\
\end{align*} that implies 
\begin{align}\nonumber 
    \sum_{i=1}^{n-2}\binom{n}{i}\left(\frac{1}{\sqrt q}\right)^i &= \left(1+\frac{1}{\sqrt q}\right)^n -1-n\left(\frac{1}{\sqrt q}\right)^{n-1}-\left(\frac{1}{\sqrt q}\right)^n\\\nonumber 
    &\le \left(1+\frac{1}{\sqrt q}\right)^n- \left(\frac{1}{\sqrt q}\right)^n \\\nonumber
    & =\frac{(\sqrt q+1)^n-1^n}{(\sqrt{q})^n}\le 2^n.
    \end{align}
Using this, we can have the following estimate: 
\begin{align}\label{combinatorio}
 \nonumber   \sum_{i=1}^{n-2}\binom{n}{i}2m^{n-i}q^{\frac{n-i-1}{2}} 
    & \le 2m^{n-1}q^{\frac{n-1}{2}}\left(\left(1+\frac{1}{\sqrt{q}}\right)^n-\left(\frac{1}{\sqrt{q}}\right)^n\right) \\ 
    & \le 2m^{n-1}q^{\frac{n-1}{2}} \cdot 2^n \le 4(2m)^{n-1}q^{\frac{n-1}{2}}.
    \end{align}
From Equations \eqref{estimateN*} and \eqref{combinatorio} and taking into account that $n \geq 5$, it follows that 
   \begin{align*}
       \left|N^* - \frac{(q-1)^n-(-1)^n}{q}\right| & \le 9(mk)^{n-1}q^{n-3/2}+4(2m)^{n-1}q^{\frac{n-1}{2}}\\
       & \le (mk)^{n-1}q^{\frac{n-1}2}\left(9q^{\frac{n-2}2}+4\right) \\
       & \le 11(mk)^{n-1}q^{\frac{2n-3}2} \\ &= 11(mk)^{n-1}q^{n-3/2}.
   \end{align*}

The bounds for $n=3$ and $n=4$ are obtained similarly.
\end{proof}

The estimate on $N^*$ gives the following sufficient condition for the existence of $\fq$--rational solutions with nonzero coordinates.

\begin{proposition} \label{existencia}
Suppose $k,m\geq 2$, $n \geq 5$, $a,b, a_i\in \fq^*$, $\operatorname{char}(\fq) \nmid mk$, $n<mk$, and $\gcd(m,q-1)=1$. If $q > 22^2(2mk)^{2n-2}$, then equation \eqref{MH} has at least one solution in $(\fq^{*})^n$.
\end{proposition}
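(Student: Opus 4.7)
The plan is to apply Proposition \ref{estimation} and show that the hypothesis $q > 22^2(2mk)^{2n-2}$ forces the resulting lower bound on $N^*$ to be strictly positive. From Proposition \ref{estimation}, for $n \geq 5$ we have
$$N^* \geq \frac{(q-1)^n - (-1)^n}{q} - 11(mk)^{n-1}q^{n-3/2},$$
so the whole matter reduces to verifying that the right-hand side exceeds zero.

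The first step I would take is to replace the awkward numerator $(q-1)^n - (-1)^n$ by a clean lower bound. Using the elementary inequality $q - 1 \geq q/2$ (valid for $q \geq 2$) gives $(q-1)^n \geq q^n/2^n$, and therefore
$$\frac{(q-1)^n - (-1)^n}{q} \geq \frac{q^{n-1}}{2^n} - \frac{1}{q}.$$
Plugging this in and factoring $q^{n-3/2}$ out of the two dominant terms yields
$$N^* \geq q^{n-3/2}\left(\frac{\sqrt{q}}{2^n} - 11(mk)^{n-1}\right) - \frac{1}{q}.$$

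The second step is to invoke the hypothesis directly. The condition $q > 22^2(2mk)^{2n-2}$ is literally equivalent to $\sqrt{q} > 22 (2mk)^{n-1} = 11 \cdot 2^n (mk)^{n-1}$, i.e.\ to $\sqrt{q}/2^n > 11(mk)^{n-1}$. Thus the parenthetical factor above is strictly positive, and the benign $-1/q$ tail is easily absorbed: since $q$ is a prime power and $22^2(2mk)^{2n-2}$ is a positive integer, one actually has $q \geq 22^2(2mk)^{2n-2}+1$, which produces a quantifiable strict margin in $\sqrt{q}/2^n - 11(mk)^{n-1}$, and this margin multiplied by $q^{n-3/2}$ (with $n \geq 5$) dwarfs $1/q$.

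The step I expect to require the most care is constant matching. The choice $(q-1)^n \geq (q/2)^n$ is exactly what pairs the $2^n$ it introduces with the $22 = 11 \cdot 2$ and with $(2mk)^{n-1} = 2^{n-1}(mk)^{n-1}$ in the hypothesis, producing the tidy equivalence $q > 22^2(2mk)^{2n-2} \Leftrightarrow \sqrt{q}/2^n > 11(mk)^{n-1}$. Any sharper but more complicated bound on $(q-1)^n$ (e.g., Bernoulli's $(q-1)^n \geq q^n - nq^{n-1}$) would shift the constant in the hypothesis. Beyond that calibration, the argument is a one-line comparison and I foresee no genuine obstacle.
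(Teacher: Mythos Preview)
Your proposal is correct and follows essentially the same route as the paper: apply Proposition \ref{estimation}, bound the main term below using $q-1\ge q/2$, and observe that the hypothesis $q>22^2(2mk)^{2n-2}$ is exactly the condition $\sqrt{q}/2^n>11(mk)^{n-1}$.

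The only cosmetic difference is that the paper disposes of the $-(-1)^n$ term before passing to $q/2$: it uses $(q-1)^n-1\ge (q-2)^n$ and then $(q-2)^n\ge (q/2)^n$ (valid since $q\ge 4$), arriving directly at
\[
N^*\ge q^{n-3/2}\Bigl(\tfrac{q^{1/2}}{2^n}-11(mk)^{n-1}\Bigr)
\]
with no trailing $-1/q$. Your version is logically fine, but the paper's ordering spares you the extra paragraph about absorbing the tail.
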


\begin{proof}
From Proposition \ref{estimation} we deduce that
\begin{align*}
    N^* &\geq \dfrac{(q-1)^n-(-1)^n}{q}-11(mk)^{n-1}q^{n-3/2} \\
    & \geq\dfrac{(q-1)^n-1}{q}-11(mk)^{n-1}q^{n-3/2} \\
    & \geq \dfrac{(q-2)^n}{q} -11(mk)^{n-1}q^{n-3/2} \\
    & \geq q^{n-3/2} \Big(\dfrac{q^{1/2}}{2^n}-11(mk)^{n-1}\Big).
\end{align*}
Thus, $N^*>0$ if
$$\dfrac{q^{1/2}}{2^n}-11(mk)^{n-1}>0,$$
and therefore we conclude that \eqref{MH} has at least one solution in $(\fq^*)^n$ if
$q>22^2 (2mk)^{2n-2}$. 
\end{proof}

\begin{remark}
With similar arguments as above, if $n=3,4$ and $q>10^2 (mk)^8$, then $N^*>0.$
\end{remark}
\begin{remark}
   From Proposition \ref{existencia}  we deduce that $N^*$ exhibits the same asymptotic behavior in terms of $q$ as in the case $k=1$ presented in \cite[Proposition 4.7]{Melina&Mariana2020}.

Whereas Mordell's estimate applies only to the prime fields $\fp$ and the special case $a=0$
our estimate holds for every finite field $\fq$ and any values of $a$. Although our error term is of higher order than Mordell's, we supply one additional term in the asymptotic expansion of $N^*$ in terms of $q$ giving a sharper overall approximation in the more general setting.
\end{remark}

\section*{Acknowledgements}
This work was initiated at the workshop “Diversity in Finite Fields and Coding Theory,” held at IMPA (Rio de Janeiro, Brazil) from 7 – 12 July 2024.
The authors gratefully acknowledge the organizers—Carolina Araujo, Grasiele Jorge, Luciane Quoos, Cecília Salgado and Suely Lima, for the invitation and the excellent working conditions they provided.
%\bibliographystyle{acm}
%\bibliography{references.bib}

\end{document}